\newtheorem{remark}[theorem]{{\sc Remark}}
\newcommand{\eps}{\varepsilon}
\title{Perturbed asymptotic expansions for interior-layer solutions  of a
semilinear reaction-diffusion problem with small diffusion
\thanks{This research was supported by Science Foundation Ireland grant 04/BR/M0055.
The first author was also supported by Science Foundation Ireland
grant 08/RFP/MTH1536.}}
\author{Natalia Kopteva\thanks{Department of Mathematics and Statistics,
University of Limerick, Limerick, Ireland (natalia.kopteva@ul.ie)}
\and  Martin Stynes\thanks{Department of Mathematics,
National University of Ireland, Cork, Ireland (m.stynes@ucc.ie)} }
\begin{document}

\maketitle

\begin{abstract}
A semilinear reaction-diffusion two-point boundary value problem,
whose second-order derivative is multiplied by a small positive
parameter $\eps^2$, is considered. It can have multiple solutions.
An asymptotic expansion is constructed for a solution that has an
interior layer. Further properties are then established for a
perturbation of this expansion. These are used in~\cite{KoStMain}
to obtain discrete sub-solutions and super-solutions for certain
finite difference methods described there, and in this way yield
convergence results for those methods.
%% ADD???
\end{abstract}

\begin{keywords}
semilinear reaction-diffusion problem, interior layer
%% ADD???  \color{red}Shishkin mesh, error estimates
\end{keywords}

\begin{AMS}
Primary 34E05, Secondary 65L10, 65L11, 65L12.
\end{AMS}

\pagestyle{myheadings}
    \thispagestyle{plain}
    \markboth {}%NATALIA KOPTEVA AND MARTIN STYNES}
    {}%INTERIOR LAYERS IN REACTION-DIFFUSION PROBLEMS}

%\newpage
%----------------------------------------------------------
\section{Introduction}

We are interested in interior-layer solutions of
the singularly perturbed semilinear reaction-diffusion
boundary-value problem
\begin{subequations}\label{1.1}%\label{1.1_new}
\begin{eqnarray}
    &F u(x) \equiv -\eps^2 u''(x)+ b(x,u) = 0
        \quad \mbox{for } x\in (0,1),& \label{1.1a}\\
         &u(0)=g_0, \;\; u(1)=g_1,& \label{1.1b}
\end{eqnarray}
\end{subequations}
where $\eps$ is a small positive parameter, $b\in
C^\infty([0,1]\times \mathbb{R})$, and $g_0$ and $g_1$ are given
constants.

Under the hypotheses that are stated below,
this problem can have multiple
solutions that exhibit interior-layer behaviour.
A companion paper~\cite{KoStMain}
discusses numerical methods for its solution. In this present report
we shall present some of the details that are omitted
from~\cite{KoStMain}.

The reduced problem of (1.1) is defined by formally setting
$\eps=0$ in (\ref{1.1a}), viz.,
\begin{equation}\label{red}
b(x,\varphi)=0 \quad \mbox{for } x\in (0,1).
\end{equation}
Assume that  the reduced problem (\ref{red}) has three simple
roots $\varphi=\varphi_k\in C^\infty[0,1]$ for $k=0,1,2$:
$$
b(x,\varphi_k(x))=0\qquad
\text{for}\;\; k=0,1,2\;\;  \text{and}\;\; x\in
    [0,1]
    %\hspace{2cm}
%    \vspace{-0.3cm}
\leqno{(A1)}
$$
where
$$
\leqno{(A2)}
\qquad
\begin{cases}
\varphi_1(x)<\varphi_0(x)<\varphi_2(x)
    &\text{for}\ x\in [0,1]\\
&\hspace{-4cm}\text{ and there is no other solution of \eqref{red} between }
    \varphi_1\text{ and }\varphi_2.
\end{cases}
$$
Here and subsequently, numbering such  as $(A1)$ indicates an
\emph{assumption} that holds true throughout the paper. Assume also
that
$$
b_u(x,\varphi_k(x))>\gamma^2>0
\qquad\text{for}\;\;k=1,2\;\;  \text{and}\;\; x\in
    [0,1]
    \vspace{-0.1cm}
\leqno{(A3)}
$$
but\vspace{-0.1cm}
$$
b_u(x,\varphi_0)<0\qquad \text{for}\;\; x\in
    [0,1]. %\hspace{4.5cm}
\leqno{(A4)}
$$

Assumption $(A3)$ says that  $\varphi_1(x)$ and $\varphi_2(x)$ are
stable reduced solutions, i.e., one may have a solution $u$ of
$(\ref{1.1})$ that is very close to either $\varphi_1$ or
$\varphi_2$
on some subdomain of $(0,1)$.
Assumption $(A4)$ means that the solution $\varphi_0(x)$ is unstable:
no solution of $(\ref{1.1})$ lies close to $\varphi_0$
on any subdomain of (0,1).
Under the hypotheses $(A1)$--$(A4)$, the equation (\ref{1.1a})
is often described as bistable.

Our further assumption is that the equation
$\int_{\varphi_1(x)}^{\varphi_2(x)} b(x,v)\,dv=0$ has a solution
$x=t_0$ such that
$\frac{d}{dx}\big[\int_{\varphi_1(x)}^{\varphi_2(x)} b(x,v)\,dv
\big]\bigr|_{x=t_0}\neq 0, $ i.e., this root is simple. As in many
asymptotic analysis papers, we also assume that the value of this derivative is
negative, since this sign corresponds to the Lyapunov stability of
an interior-layer solution $u(x)$  of (\ref{1.1}) that switches from
$\varphi_1$ to $\varphi_2$ when $u$ is regarded as a steady-state
solution of the time-dependent parabolic problem $v_t -\eps^2
v_{xx}+ b(x,v) =0$
(see \cite[Section 7, Remark~3]{BVN97};
if instead the derivative were positive, this
would correspond to Lyapunov stability of an interior-layer solution
that switches from $\varphi_2$ to $\varphi_1$).
By Assumption $(A1)$
these hypotheses on the integrals of $b$ are equivalent to the
assumptions
$$
\int_{\varphi_1(t_0)}^{\varphi_2(t_0)}\!\! b(t_0,v)\,dv =0
\qquad \text{and} \qquad
    \int_{\varphi_1(t_0)}^{\varphi_2(t_0)}\!\!
b_x(t_0,v)\,dv =-C_I< 0.  %\hspace{0.9cm}
\leqno{(A5)}
$$
Similar conditions are assumed in \cite[\S 4.15.4]{VB},
\cite[\S2.3.2]{VB2} and also in \cite{FifeGr,Nef} for an analogous
two-dimensional problem and \cite{Fife76} for a analogous system of equations.

\begin{remark}
Assumption $(A2)$ can be relaxed to allow other roots of (\ref{red})
between $\varphi_1$ and $\varphi_2$
provided that
$\int_{\varphi_1(t_0)}^{v}\!\! b(t_0,s)\,ds >0$
for all $v\in(\varphi_1(t_0),\varphi_2(t_0))$.
Note that this  inequality follows immediately from
 $(A1)$--$(A5)$ if $\varphi_0$ is the only reduced solution between $\varphi_1$
 and $\varphi_2$.
\end{remark}

The solutions $\varphi_1$ and $\varphi_2$ of (\ref{red}) do not in
general satisfy either of the boundary conditions in (\ref{1.1b}).
In order to focus on interior layers, we exclude boundary layers
 by assuming that
$$
\varphi_1(0)=g_0,\qquad
\varphi_2(1)=g_1, \qquad
\varphi''_1(0)=\varphi''_2(1)=0.     %\hspace{4.6cm}
\leqno{(A6)}
$$

Under Assumptions $(A1)$--$(A6)$, the
problem
(\ref{1.1}) has a solution that, roughly speaking, lies in the
neighbourhood of $\varphi_1(x)$ and $\varphi_2(x)$ for
$x\in[0,t_0)$ and $x\in(t_0,1]$ respectively
(see \cite[Corollary~6.7]{KoStMain}). Near $x=t_0$ the
solution switches from $\varphi_1$ to $\varphi_2$, which results
in an interior transition layer of width $O(\eps|\ln\eps|)$.

The structure of the paper is as follows. In
Section~\ref{sec:asymexp} an asymptotic expansion of a
interior-layer solution of (\ref{1.1}) is constructed; this analysis
draws on ideas of \cite{Fife,KoSt04,Nef,VB2}. A modified version of
this asymptotic expansion, related to \cite{Nef}, is examined in
Section~\ref{sec_per_as_exp}. This modified expansion is used in \cite{KoStMain}
to construct discrete sub-solutions and super-solutions for the numerical methods
used in that paper to solve~\eqref{1.1}.
\smallskip

\noindent{\it Notation\/}. Throughout the paper, $C, C', \bar C$ and $\bar
C'$, sometimes subscripted, denote generic positive constants that
are independent of $\eps$ and of the mesh; furthermore, $\bar C$
and $\bar C'$ are taken sufficiently large where this property is
needed. These constants may take different values in different
places. Notation such as $f=O(z)$ means $|f| \le Cz$ for some $C$.

\section{Asymptotic expansion for the continuous problem}\label{sec:asymexp}
The point $t_0\in(0,1)$ is fixed by Assumption $(A5)$. Define the
stretched variable $\xi$ by
$$
\xi:=(x-t_0)/\eps. %,\quad \xi:=\frac{x-t_0}\eps,
$$
Then a standard calculation shows that the zero-order
interior-layer term $V_0(\xi)$ of the asymptotic expansion of $u$
is given by a solution of the following problem:
\begin{subequations}
\begin{equation}\label{V_0_nat}
-{\textstyle\frac{d^2}{d \xi^2}}V_0+b(t_0,V_0) = 0\quad\mbox{for } \xi\in
     \mathbb{R},
\quad\;
V_0(-\infty)=\varphi_1(t_0), \;\; V_0(\infty) = \varphi_2(t_0).
\end{equation}
We shall see shortly that (\ref{V_0_nat}) has a solution $V_0(\xi)$,
but this solution is not unique as $V_0(\xi\pm C)$ is also a
solution for any constant $C$. Once we know that $V_0$ exists and
is a strictly increasing function,
consider a specific solution $\hat V_0$ of (\ref{V_0_nat}) subject to
the parametrization
\begin{equation}\label{V0_hat}
\hat V_0(0)=\varphi_0(t_0).
\end{equation}
One might expect $u(x)=\varphi_0(t_0)$ to hold at $x=t_0$ and
thus the interior layer to be described by $\hat V_0(\xi)$.
It is not the case, however; as we shall see below,
$u(x)=\varphi(t_0)$ at $x=t=t_0+\eps t_1+\eps^2 t_2+\cdots$, and
the interior layer is described by $\hat V_0(\xi- t_1-\eps t_2-\cdots)$.
Here $t_1$, $t_2, \ldots$ are independent of $\eps$ and can be found
when constructing an asymptotic expansion of $u$,
in particular, the values of $t_1$ and $t_2$ are specified in the proof of
Lemma~\ref{lem_u_as}.
In our analysis, we take $t=t_0+\eps t_1+\eps^2 t_2$, skipping higher-order terms, and
 invoke a perturbed version of $\hat V_0(\xi- t_1-\eps t_2)$
defined by
\begin{equation}\label{V0_hat_V0}
V_0(\xi;p)=\hat V_0(\xi-\bar t_1+p),
\qquad
\bar t_1=t_1+\eps t_2.
\end{equation}
\end{subequations}
Here the parameter $p$ satisfies $|p|\le p^*$ for any %some arbitrarily large
fixed positive constant $p^*$,
but will typically take very small values.

\begin{lemma}\label{lem:V0}
Set $\bar\gamma^2: = \min_{k=1,2}b_u(t_0,\varphi_k(t_0))>\gamma^2$,
where $\gamma>0$ is from $(A3)$.
For any constant $\bar t_1$ and all $|p|\le p^*$, there exist unique monotone solutions
$\hat V_0(\xi)$ and $V_0(\xi;p)$ of (\ref{V_0_nat}) that satisfy (\ref{V0_hat_V0}),\,(\ref{V0_hat}).
Furthermore, $\hat V_0$ and $V_0$ are in $C^\infty(\mathbb{R})$,
\begin{equation}\label{chi_def}
\hat\chi(\xi):={\textstyle\frac{d}{d \xi}}\hat V_0(\xi)>0,\qquad
\chi(\xi;p):={\textstyle\frac{d}{d \xi}}V_0(\xi;p)>0
\qquad\mbox{for }\xi\in\mathbb{R}.
\end{equation}
For any arbitrarily small but fixed $\lambda\in(0,\bar\gamma)$,
there is a constant $C_\lambda$ such that
%for $k=1,\ldots,6$ one has
%
\begin{equation}\label{chi_decay}
\hat \chi(\xi)+\chi(\xi;p)%+\Bigl|\frac{d^k  \hat V_0}{d \xi^k} \Bigr|
%+\Bigl|\frac{d^k  V_0}{d \xi^k}  \Bigr|
\le C_\lambda  e^{-(\bar\gamma-\lambda)|\xi|}
\qquad\mbox{for}\;\;\xi\in\mathbb{R},\;\;|p|\le p^*.
\end{equation}
%for $\xi\in\mathbb{R}$ and $k=1,\ldots,6$.
%
There are constants $C'$ and $C''$ such that for all $|p|\le p^*$
%{\color{red}$p\in\mathbb{R}$}
one has
\begin{equation}\label{V0_chi}
C'\chi\le V_0-\varphi_1(t_0)\le C''\chi\;\;\mbox{for}\;\xi<0,
\quad
C'\chi\le \varphi_2(t_0)-V_0\le C''\chi\;\;\mbox{for}\;\xi>0.
\end{equation}
\end{lemma}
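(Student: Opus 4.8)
The plan is to exploit the fact that \eqref{V_0_nat} is autonomous and hence admits a first integral. Setting $\Phi(v):=\int_{\varphi_1(t_0)}^{v} b(t_0,s)\,ds$ and multiplying \eqref{V_0_nat} by $V_0'$, I would integrate to obtain $\tfrac12(V_0')^2=\Phi(V_0)+E$; letting $\xi\to-\infty$ forces $V_0'\to0$ and $V_0\to\varphi_1(t_0)$, so $E=0$, while $(A5)$ gives $\Phi(\varphi_2(t_0))=0$, which is exactly the compatibility that places both equilibria on the same energy level. For an increasing solution this reduces \eqref{V_0_nat} to the separable equation $V_0'=\sqrt{2\Phi(V_0)}$. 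The sign of $\Phi$ is decisive: by $(A1)$, $(A3)$ and the inequality of the Remark (which holds here because $\varphi_0$ is the only reduced root between $\varphi_1$ and $\varphi_2$), one has $\Phi(v)>0$ on $(\varphi_1(t_0),\varphi_2(t_0))$, while $\Phi$ vanishes quadratically at each endpoint since $\Phi'(\varphi_k(t_0))=b(t_0,\varphi_k(t_0))=0$ and $\Phi''(\varphi_k(t_0))=b_u(t_0,\varphi_k(t_0))\ge\bar\gamma^2>0$.

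In the $(V_0,V_0')$ phase plane the points $\varphi_1(t_0)$ and $\varphi_2(t_0)$ are then saddles lying on the level set $\{\tfrac12 W^2=\Phi(V)\}$, and over the strip $\varphi_1(t_0)<V<\varphi_2(t_0)$ this set is the single smooth curve $W=\sqrt{2\Phi(V)}>0$ joining them. Integrating $dV_0/\sqrt{2\Phi(V_0)}=d\xi$ and using the quadratic vanishing of $\Phi$ (so the integral diverges logarithmically at each endpoint) shows that the connecting orbit is traversed over all $\xi\in\mathbb{R}$, giving a strictly increasing heteroclinic solution that reaches its limits only as $\xi\to\pm\infty$; the normalization \eqref{V0_hat} fixes the translate and \eqref{V0_hat_V0} defines $V_0(\cdot;p)$ as a shift of $\hat V_0$. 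Uniqueness among monotone solutions follows since any such solution obeys the same first integral and the same normalization, and smoothness follows by bootstrapping $V_0''=b(t_0,V_0)$. Positivity of $\hat\chi=\sqrt{2\Phi(\hat V_0)}$ and of $\chi(\cdot;p)$ in \eqref{chi_def} is then immediate.

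For \eqref{chi_decay} I would differentiate \eqref{V_0_nat} to find that $\chi$ solves the variational equation $-\chi''+b_u(t_0,V_0)\chi=0$. Given $\lambda\in(0,\bar\gamma)$, continuity of $b_u$ together with $V_0(\xi)\to\varphi_k(t_0)$ as $\xi\to\pm\infty$ yields $\xi_0$ with $b_u(t_0,V_0(\xi))\ge(\bar\gamma-\lambda)^2$ for $|\xi|\ge\xi_0$. On $[\xi_0,\infty)$ the barrier $w(\xi)=\hat\chi(\xi_0)\,e^{-(\bar\gamma-\lambda)(\xi-\xi_0)}$ satisfies $-w''+b_u w=(b_u-(\bar\gamma-\lambda)^2)w\ge0$, so it is a supersolution; since $\hat\chi(\xi_0)=w(\xi_0)$, both decay to $0$, and $b_u>0$ there, the maximum principle gives $\hat\chi\le w$ on $[\xi_0,\infty)$. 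A mirror-image barrier covers $\xi\le-\xi_0$, and on $|\xi|\le\xi_0$ the bound is trivial by boundedness; the estimate for $\chi(\cdot;p)=\hat\chi(\cdot-\bar t_1+p)$ follows after translating and absorbing $e^{(\bar\gamma-\lambda)(|\bar t_1|+p^*)}$ into $C_\lambda$.

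Finally, for \eqref{V0_chi} I would again use $\chi=\sqrt{2\Phi(V_0)}$ pointwise. The ratio $g(v):=(v-\varphi_1(t_0))/\sqrt{2\Phi(v)}$ is continuous and positive on $(\varphi_1(t_0),\varphi_2(t_0))$, tends to $b_u(t_0,\varphi_1(t_0))^{-1/2}>0$ as $v\to\varphi_1(t_0)^+$, and stays bounded on any range bounded away from $\varphi_2(t_0)$; for $\xi<0$ the shift keeps $V_0(\xi;p)=\hat V_0(\xi-\bar t_1+p)\le\hat V_0(-\bar t_1+p^*)<\varphi_2(t_0)$, so the infimum and supremum of $g$ over the attained range are positive and finite, giving $C'\chi\le V_0-\varphi_1(t_0)\le C''\chi$. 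The case $\xi>0$ is symmetric, using $(\varphi_2(t_0)-v)/\sqrt{2\Phi(v)}$. The step I expect to be the main obstacle is the existence assertion: converting the phase-plane picture into a genuine heteroclinic connection hinges on combining $\Phi>0$ on the open interval with the quadratic vanishing at the endpoints, which is precisely what guarantees that the orbit approaches the equilibria only in the limits $\xi\to\pm\infty$ and not at finite $\xi$. Once this is in hand, smoothness, positivity, decay, and \eqref{V0_chi} all follow routinely from the first integral and the variational equation.
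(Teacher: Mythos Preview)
Your argument is correct and is precisely the phase-plane analysis that the paper invokes: its own proof consists only of a reference to \cite[Lemma~2.1]{Fife} and \cite[Lemma~2.1]{KoSt04}, where the first-integral reduction $\tfrac12(V_0')^2=\Phi(V_0)$, the heteroclinic connection, and the exponential decay are obtained along the lines you sketch. You have essentially supplied the details the paper defers to those references.
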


\begin{proof}
In view of $(A1)$--$(A5)$, these properties follow from the proof of \cite[Lemma 2.1]{Fife}
or a slight extension of the proof of \cite[Lemma~2.1]{KoSt04}
using phase-plane analysis. %\qquad
\end{proof}

An interior-layer solution $u$ of problem (\ref{1.1}) can be regarded as having a boundary layer at $t_0$ on each of
the subintervals $[0,t_0]$ and $[t_0,1]$.
Therefore we shall construct standard  second-order boundary-layer asymptotic expansions
for $u$
on each of these sub-intervals. As $u(t_0)$ is unknown,
we impose the condition $u_{\rm as}(t_0)=V_0(0;p)$,
or equivalently $u_{\rm as}(t_0)=\hat V_0(-t_1-\eps t_2+p)$.
Here
$t_1$ and $t_2$ will be chosen in Lemma~\ref{lem_u_as}
to match the two asymptotic expansions at $x=t_0$,
while varying the parameter $p$
will be used in the construction of sub- and super- solutions.
We partly follow the asymptotic analyses of
\cite[Section 2.3.2]{VB2} and \cite[Section 3]{Nef}, where
the location of the interior layer was outlined in the form  of an
asymptotic expansion $ t=t_0 + \eps t_1 + \eps^2 t_2+\cdots$.
 Our asymptotic analysis differs from
these earlier works in that  we
expand about the point $t_0$ instead of about the point $t$ (which
is a priori unknown); this is useful in the subsequent numerical
analysis because our layer-adapted mesh will be centred on the known
point $t_0$.

As $u_{\rm as}(t_0)=V_0(0;p)$, the resulting asymptotic expansion $u_{\rm as}(x)=u_{\rm as}(x;p)$
will also involve the parameter $p$ as follows:
\begin{equation}\label{u_as}
u_{\rm as}(x;p):=u_0(x)+\eps^2 u_2(x)+v_0(\xi;p)+\eps v_1(\xi;p)
+\eps^2 v_2(\xi;p).
\end{equation}
Here the smooth component $u_0+\eps^2 u_2$ is defined by
\begin{equation}\label{u02}
u_0(x):=\left\{\begin{array}{ll}
\varphi_1(x),&x\in[0,t_0),\\
\varphi_2(x),&x\in(t_0,1],
\end{array}\right.,
\qquad
u_2(x):=u_0''/b_u(x,u_0),
\end{equation}
so that $F[u_0+\eps^2 u_2]=O(\eps^4)$ for $x\in[0,1]\setminus\{t_0\}$.

To describe the layer component $v_0+\eps v_1+\eps^2 v_2$,
we make use of the auxiliary function
\begin{equation}\label{B_def}
B(x,s):=b(x, u_0(x)+s),
\end{equation}
which is clearly discontinuous at $x=t_0$.
By (A2), we have ${\textstyle\frac{\partial^m}{\partial x^m}} B(x,0)=0$
for all $x\neq t_0$,
which implies that
\begin{equation}\label{B_xxx}
|{\textstyle\frac{\partial^m}{\partial x^m}} B(x,s)|\le C |s|
\qquad\mbox{for}\;\; x\in[0,1]\setminus\{t_0\},\;\; s\in\mathbb{R},\;\;
m=0,1,2 .
\end{equation}
Furthermore, we use the notation
\begin{equation}\label{t_0hat}
\hat t_0=\hat t_0(x):=\left\{\begin{array}{ll}
t_0^-,&x\in[0,t_0)\\
t_0^+&x\in(t_0,1]
\end{array}\right. ;
\end{equation}
thus, e.g., $u_0(\hat t_0)=\varphi_1(t_0)$
and $B(\hat t_0,s)=b(t_0,\varphi_1(t_0)+s)$
for $x<t_0$, while
$u_0(\hat t_0)=\varphi_2(t_0)$
and $B(\hat t_0,s)=b(t_0,\varphi_2(t_0)+s)$
for $x>t_0$.

The zero-order boundary-layer function $v_0(\xi)=v_0(\xi;p)$ is defined by
\begin{equation}\label{v_0}
-{\textstyle\frac{d^2}{d \xi^2}}v_0+B(\hat t_0,v_0)=0,
\quad v_0(0^\pm)=V_0(0;p)-u_0(t_0^\pm),
\quad v_0(\pm\infty)=0.
\end{equation}
Comparing this with (\ref{V_0_nat}), we see that
\begin{equation}\label{v0_V0}
v_0(\xi;p)= V_0(\xi;p)-u_0(\hat t_0).
\end{equation}

Higher-order boundary-layer components $v_1(\xi)=v_1(\xi;p)$
and $v_2(\xi)=v_2(\xi;p)$ and defined by
\begin{equation}\label{v_1}
[-{\textstyle\frac{d^2}{d \xi^2}}+ B_s(\hat t_0,v_0)]\,v_1=-\xi B_x(\hat t_0,v_0),
\quad
v_1(0)=v_1(\pm\infty)=0,
\end{equation}
\begin{equation}\label{v_2}
\begin{array}{c}
[-{\textstyle\frac{d^2}{d \xi^2}}+ B_s(\hat t_0,v_0)]\,v_2=\psi_2(\xi),
\quad
v_2(0^\pm)=-u_2(t_0^\pm),\quad v_2(\pm\infty)=0,\\
\psi_2(\xi):=-{\textstyle\frac{\xi^2}2} B_{xx}(\hat t_0,v_0)-\xi v_1 B_{xs}(\hat t_0,v_0)
-{\textstyle\frac{v_1^2}2} B_{ss}(\hat t_0,v_0)\\
-u_2(\hat t_0)[B_s(\hat t_0,v_0)-B_s(\hat t_0,0)].
\qquad\quad
\end{array}
\end{equation}
The functions $v_1$ and $v_2$ depend on $p$ since they are defined using $v_0(\xi;p)$.

Note that $v_0$ and $v_2$ have a discontinuity at $\xi=0$, but
 $u_0+v_0=[u_0(x)-u_0(\hat t_0)]+V_0$
and $u_2+v_2$
 are continuous at $x=t_0$.
Thus $u_{\rm as}(x;p)$ is continuous for $x\in[0,1]$.

Given any suitable function $v(x)$, introduce the functional
$$
\Phi[v(\cdot)]:=\eps {\textstyle\frac{dv}{d x}}\bigr|_{x=t_0^-}
-\eps{\textstyle\frac{d v}{d x}}\bigr|_{x=t_0^+}\,,
$$
which will be used to match our asymptotic expansion at $x=t_0$;
see Lemma~\ref{lem_u_as}.
%below.
%To be more precise, we aim to $\Phi[u_{\rm as}(\cdot;p)]=O(\eps^2+p)$

To establish the existence and properties of  $v_1$ and $v_2$,
note that (\ref{v_1}) and (\ref{v_2}) are particular cases of a
 general problem
\begin{equation}\label{nu}
[-{\textstyle\frac{d^2}{d \xi^2}}+ B_s(\hat t_0, v_0)]\,\nu=\psi(\xi)\quad
\mbox{for }\xi\in\mathbb{R}\setminus\{0\},
\quad\nu(0^\pm)=\nu^\pm_0,\quad\nu(\pm\infty)=0,
\end{equation}
for which we have the following result.

\begin{lemma}\label{lem_nu_psi}
Let $|\psi(\xi)|\le C(1+|\xi|^k)\,\chi(\xi)$. Then there exists a solution $\nu$ of
 problem~(\ref{nu}), which satisfies $|\nu(\xi)|\le C(1+|\xi|^{k+1})\,\chi(\xi)$ and
%Furthermore, $\nu\in C^2(\mathbb{R})$ if and only if
%If $\nu^+=\nu^-$, then
\begin{equation}\label{Phi_nu}
%\int_{-\infty}^\infty \psi\chi \,d\xi=0;
%\qquad
\Phi[\nu]=
\nu'(0^-)-\nu'(0^+)={\textstyle\frac{1}{\chi(0)}}\Bigl(
-\int_{-\infty}^\infty\!\! \psi(\xi)\,\chi(\xi) \,d\xi
+[\nu_0^--\nu^+_0]\,\chi'(0)
\Bigr).
\end{equation}
Furthermore, if $\psi(\xi)\ge0$ and $\nu^\pm\ge 0$, then $\nu(\xi)\ge 0$
for all $\xi$.
\end{lemma}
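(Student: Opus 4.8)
The plan is to exploit the fact that the operator in (\ref{nu}) is, up to the identification $B_s(\hat t_0,v_0)=b_u(t_0,V_0)$, exactly the linearization of (\ref{V_0_nat}) about $V_0$, so that $\chi=\frac{d}{d\xi}V_0$ from (\ref{chi_def}) is a positive homogeneous solution. First I would differentiate (\ref{V_0_nat}) in $\xi$ to obtain $-\chi''+b_u(t_0,V_0)\chi=0$, and observe that on each half-line $B_s(\hat t_0,v_0)=b_u(t_0,u_0(\hat t_0)+v_0)=b_u(t_0,V_0)$ by (\ref{v0_V0}); hence $\chi$ annihilates the operator $L:=-\frac{d^2}{d\xi^2}+B_s(\hat t_0,v_0)$ on each of $(-\infty,0)$ and $(0,\infty)$. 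Since $V_0\in C^\infty(\mathbb{R})$ by Lemma~\ref{lem:V0}, both $\chi$ and $\chi'$ are continuous at $\xi=0$. This reduces everything to a scalar second-order problem possessing a known positive decaying solution, to which reduction of order applies.

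For existence and the pointwise bound I would set $\nu=\chi w$ and compute $L\nu=-\frac{1}{\chi}(\chi^2 w')'$, so that (\ref{nu}) becomes $(\chi^2 w')'=-\chi\psi$ on each half-line. On $(0,\infty)$ the decaying solution is obtained by integrating from $+\infty$, giving $w'(\xi)=\chi^{-2}(\xi)\int_\xi^\infty\chi\psi\,d\eta$ and then $w(\xi)=\nu_0^+/\chi(0)+\int_0^\xi w'(s)\,ds$, which satisfies $\nu(0^+)=\nu_0^+$ by construction; the half-line $(-\infty,0)$ is treated symmetrically. Using $|\psi|\le C(1+|\xi|^k)\chi$ together with the exponential decay of $\chi$ from (\ref{chi_decay}) (and the matching lower bound on $\chi$ furnished by the phase-plane asymptotics behind Lemma~\ref{lem:V0}), one estimates the inner integral by $\int_s^\infty(1+\eta^k)\chi^2\,d\eta\le C(1+s^k)\chi^2(s)$, whence $|w'|\le C(1+|\xi|^k)$ and $|w|\le C(1+|\xi|^{k+1})$; multiplying by $\chi$ yields $|\nu|\le C(1+|\xi|^{k+1})\chi$, which in particular forces $\nu(\pm\infty)=0$. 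This bookkeeping — tracking how a single integration against the weight $\chi$ raises the polynomial power by one while preserving exponential decay — is the step I expect to require the most care, since it is precisely where the two-sided exponential control of $\chi$ is essential.

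For the jump formula (\ref{Phi_nu}) I would use a Lagrange identity: multiplying $L\nu=\psi$ by $\chi$ and $L\chi=0$ by $\nu$ and subtracting gives $(\chi\nu'-\nu\chi')'=-\chi\psi$. Integrating this separately over $(-\infty,0)$ and over $(0,\infty)$, the contributions at $\pm\infty$ vanish by the exponential decay just established, while $\chi$ and $\chi'$ are continuous at $\xi=0$ and $\nu$ carries the jump $\nu_0^--\nu_0^+$. Adding the two resulting identities and solving for $\nu'(0^-)-\nu'(0^+)$ produces exactly (\ref{Phi_nu}).

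Finally, for positivity I would return to $P:=\chi^2 w'=\chi\nu'-\nu\chi'$ and note that $P'=-\chi\psi\le 0$ when $\psi\ge 0$, so $P$ is non-increasing on each half-line and $P(\pm\infty)=0$. Hence $P\ge 0$ on $(0,\infty)$ and $P\le 0$ on $(-\infty,0)$, i.e.\ $w$ is non-decreasing on $(0,\infty)$ and non-increasing on $(-\infty,0)$; since $w(0^\pm)=\nu_0^\pm/\chi(0)\ge 0$, it follows that $w\ge 0$ throughout and therefore $\nu=\chi w\ge 0$. This sidesteps the difficulty that $B_s(\hat t_0,v_0)=b_u(t_0,V_0)$ changes sign inside the layer, so that $L$ itself need not admit a maximum principle; the positive homogeneous solution $\chi$ supplies the substitution that restores one.
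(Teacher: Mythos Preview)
Your argument is correct and follows essentially the same route as the paper: reduction of order with the positive homogeneous solution $\chi$, the same explicit integral representation of $\nu$ on each half-line, and the jump formula and positivity read off from that representation. The one place where the two diverge is precisely the step you flag as delicate. To obtain $\bigl|\int_{-\infty}^{\eta}\chi\psi\bigr|\le C(1+|\eta|^k)\chi^2(\eta)$, the paper does not invoke a two-sided exponential bound for $\chi$; instead it uses (\ref{V0_chi}) and (\ref{v0_V0}) to write $\chi^2\,dt\le\frac{1}{2C'}\,d(v_0^2)$, so that $|\psi|\chi\,dt\le C(1+|t|^k)\,d(v_0^2)$, and then integrates by parts $k$ times. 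This trick uses only the inequalities already recorded in Lemma~\ref{lem:V0}, whereas your version requires the matching lower exponential rate for $\chi$, which is true but only implicit in the phase-plane analysis cited there.
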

\begin{proof}
The desired assertions follow from the explicit solution formula
$$
    \nu(\xi)= \chi(\xi)\int^{0}_\xi\! \chi^{-2}(\eta)
    \int^\eta_{-\infty} \!\!\chi(t)\,\psi(t)\,dt\,d\eta
    + {\textstyle\frac{\nu_0^-}{\chi(0)}}\,\chi(\xi)\qquad\mbox{for}\;\xi<0,
$$\vspace{-0.5cm}$$
 \nu(\xi)=
    \chi(\xi)\int_{0}^\xi\! \chi^{-2}(\eta) \int_\eta^\infty
    \!\!\chi(t)\,\psi(t)\,dt\,d\eta
    + {\textstyle\frac{\nu_0^+}{\chi(0)}}\,\chi(\xi)\qquad\mbox{for}\;\xi>0,
$$
which is obtained by variation of parameters
noting that, by (\ref{V_0_nat}),\,(\ref{chi_def}), the function $\chi$ satisfies
$-{\textstyle\frac{d^2}{d \xi^2}}\chi+\chi B_s(\hat t_0, v_0)=0$;
see \cite[Lemma~2.2]{Fife}.
Now, a calculation yields (\ref{Phi_nu}) and the other assertions.

We shall show, e.g., that $|\nu|\le C(1+|\xi|^{k+1})\,\chi$
for $\xi<0$. As it follows from (\ref{V0_chi}),\,(\ref{v0_V0}) that
$C'\chi\le v_0\le C''\chi$, then we have
$\chi^2 dt\le \frac1{C'}v_0\,dv_0=\frac1{2C'}d(v_0^2)$
and therefore $|\psi|\chi\,dt\le C(1+|t|^k)\,d(v_0^2)$.
Thus, integrating by parts $k$ times, one gets
$|\int^\eta_{-\infty} \!\chi(t)\psi(t)\,dt| \le C\chi^2(1+|\eta|^k)$.
The desired bound follows.
%
%\qquad
\end{proof}

We shall now apply Lemma~\ref{lem_nu_psi} to establish properties of
$v_0$, $v_1$ and $v_2$.

\begin{lemma}\label{lem:vj}
For any constant $t_1$ and $t_2$ in (\ref{V0_hat_V0}),
there exist solutions $v_0$, $v_1$ and $v_2$
of problems (\ref{v_0}),\, (\ref{v_1}) and (\ref{v_2}),
respectively.
The function $v_0$ satisfies
\begin{equation}\label{v0_sign}
({\rm sgn}\,\xi)\cdot v_0(\xi)> 0%\; \text{for }\xi < 0, \quad v_0(\xi)<0 \;
%\text{for }\xi> 0,
\qquad\mbox{and}\qquad
|v_0(\xi)|\le C''\chi(\xi) \qquad
\text{for }\xi\in\mathbb{R}\setminus\{0\}.
\end{equation}
Furthermore, assuming that $|t_1|+|t_2|\le C$ and $|p|\le p^*$,
for any arbitrarily small but fixed $\lambda\in(0,\bar\gamma)$,
there is a constant $C_\lambda$ such that
%for $k=1,\ldots,6$ one has
%
\begin{equation}\label{v012_der}
\bigl|{\textstyle{\frac{d^k }{d \xi^k}}}  v_j \bigr|
\le C_\lambda  e^{-(\bar\gamma-\lambda)|\xi|}
\qquad\mbox{for}\;\;\xi\in\mathbb{R}\setminus\{0\},\;\; j=0,1,2,\;\; k=0,\ldots,6.
\end{equation}
\end{lemma}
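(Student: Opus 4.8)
The plan is to treat the three layer functions in the order $v_0$, $v_1$, $v_2$, establishing the function bounds first and then bootstrapping to the derivatives. For $v_0$ there is nothing to construct: the identity \eqref{v0_V0} expresses $v_0$ through the function $V_0$ of Lemma~\ref{lem:V0}, so existence is immediate, and both the sign statement and the bound $|v_0|\le C''\chi$ in \eqref{v0_sign} read off directly from \eqref{V0_chi}. For $v_1$ and $v_2$ I would invoke Lemma~\ref{lem_nu_psi}, since \eqref{v_1} and \eqref{v_2} are the special cases of \eqref{nu} with $\psi=-\xi B_x(\hat t_0,v_0)$, $\nu_0^\pm=0$, and with $\psi=\psi_2$, $\nu_0^\pm=-u_2(t_0^\pm)$ respectively. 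To apply that lemma I must verify its hypothesis $|\psi|\le C(1+|\xi|^k)\chi$. Using \eqref{B_xxx} (which gives $|B_x(\hat t_0,v_0)|\le C|v_0|\le C\chi$) one obtains $|\psi_1|\le C(1+|\xi|)\chi$, hence $|v_1|\le C(1+|\xi|^2)\chi$. For $\psi_2$ the four summands are estimated term by term: the purely $x$-differentiated factor $B_{xx}$ and the bracketed difference are $O(|v_0|)=O(\chi)$ by \eqref{B_xxx}, while $B_{xs}$ and $B_{ss}$ are merely bounded; combined with $|v_1|\le C(1+|\xi|^2)\chi$ and $\chi^2\le C\chi$ this yields $|\psi_2|\le C(1+|\xi|^4)\chi$ and thus $|v_2|\le C(1+|\xi|^5)\chi$. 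Existence and the function bounds are then in hand.

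The substantive part is the derivative bound \eqref{v012_der}, for which my strategy is a bootstrap on the order $k$ driven by the defining ODEs. Rewriting each equation as $v_j''=B_s(\hat t_0,v_0)\,v_j-\psi_j$ (with $\psi_0=0$) and differentiating $k-2$ times by the Leibniz rule expresses $v_j^{(k)}$, for $2\le k\le 6$, as a finite sum of products of $\tfrac{d^i}{d\xi^i}B_s(\hat t_0,v_0)$ with lower-order derivatives $v_j^{(\ell)}$, $\ell\le k-2$, minus $\psi_j^{(k-2)}$. Hence once $v_j$ and $v_j'$ are controlled, all higher derivatives follow by induction. The two base cases are handled differently: $v_j$ is already estimated above, and for $v_j'$ I would differentiate the explicit variation-of-parameters formula displayed in the proof of Lemma~\ref{lem_nu_psi}. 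That differentiation produces one term equal to $(\chi'/\chi)\,v_j$ together with the term $-\chi^{-1}(\xi)\int_{-\infty}^\xi\chi\psi\,dt$; using the ratio bound $|\chi'/\chi|\le C$ (itself a consequence of $\chi'=v_0''=B(\hat t_0,v_0)=O(\chi)$) and the inner estimate $|\int_{-\infty}^\eta\chi\psi\,dt|\le C\chi^2(1+|\eta|^k)$ already obtained inside Lemma~\ref{lem_nu_psi}, both terms are $O((1+|\xi|^{k+1})\chi)$, so $v_j'$ obeys a polynomial-times-$\chi$ bound of the same type as $v_j$.

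Feeding the bootstrap requires two families of auxiliary estimates. First, the coefficient derivatives $\tfrac{d^i}{d\xi^i}B_s(\hat t_0,v_0)$: by the chain rule these are sums of products of bounded factors $B_{s\cdots s}(\hat t_0,v_0)$ with derivatives of $v_0$, so I first establish $|v_0^{(i)}|\le C\chi$ for all $i$ by differentiating $v_0''=B(\hat t_0,v_0)$ repeatedly (each differentiation only produces factors already bounded by $C\chi$ together with bounded derivatives of $B$, and $\chi^m\le C\chi$), which gives $|\tfrac{d^i}{d\xi^i}B_s(\hat t_0,v_0)|\le C\chi$ for $i\ge1$ and boundedness for $i=0$. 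Second, the derivatives $\psi_j^{(m)}$ for $m\le 4$: those of $\psi_1$ need only the $v_0$-derivative bounds just obtained, whereas those of $\psi_2$ also involve derivatives of $v_1$ up to order $4$. This fixes the order of work: complete $v_0$ (all derivatives) first, then $v_1$ up to order $6$, and only then $v_2$, whose estimates legitimately use the already-proven $v_1$ bounds. At the very end each polynomial-times-$\chi$ bound is converted into the stated form by \eqref{chi_decay}: writing $\chi\le C_{\lambda'}e^{-(\bar\gamma-\lambda')|\xi|}$ with $\lambda'<\lambda$ and absorbing the polynomial into $e^{-(\lambda-\lambda')|\xi|}$ gives $|\tfrac{d^k}{d\xi^k}v_j|\le C_\lambda e^{-(\bar\gamma-\lambda)|\xi|}$.

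The main obstacle I anticipate is the first-derivative base case rather than the recursion: because Assumption $(A4)$ makes the coefficient $B_s(\hat t_0,v_0)=b_u(t_0,V_0)$ change sign as $V_0$ crosses $\varphi_0(t_0)$, one cannot bound $v_j'$ by a naive maximum-principle or energy argument and must instead differentiate the $\chi$-based integral representation and cancel the apparently non-decaying $\chi'$ term against the double integral, controlling everything through $|\chi'/\chi|\le C$ and the $\int\chi\psi$ estimate. A secondary bookkeeping difficulty is tracking how the polynomial degree grows at each stage (from $|\xi|^2$ for $v_1$ to $|\xi|^5$ for $v_2$, with one extra power per derivative), but since all these powers are finite and $\chi$ decays exponentially, they are harmless once absorbed via \eqref{chi_decay}.
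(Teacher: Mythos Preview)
Your proposal is correct and follows essentially the same line as the paper: existence and the sign/decay of $v_0$ come from \eqref{v0_V0} and Lemma~\ref{lem:V0}, while $v_1,v_2$ are produced by Lemma~\ref{lem_nu_psi} after bounding their right-hand sides via \eqref{B_xxx} and $|v_0|\le C''\chi$. The only organizational difference is in the derivative step: the paper simply observes that each $\tfrac{d^k}{d\xi^k}v_j$ again solves a problem of type~\eqref{nu} (with a new right-hand side built from lower-order quantities) and re-applies Lemma~\ref{lem_nu_psi}, whereas you carry out an equivalent explicit bootstrap from $v_j''=B_s(\hat t_0,v_0)v_j-\psi_j$ and handle the base case $v_j'$ by differentiating the variation-of-parameters formula---both routes yield polynomial-times-$\chi$ bounds that are then absorbed into the exponential via \eqref{chi_decay}.
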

\begin{proof}
The existence and properties (\ref{v0_sign}) of the function $v_0$ as well
the bound (\ref{v012_der}) for $j=0$, $k=0,1$
follow from the observation (\ref{v0_V0}) combined with
(\ref{t_0hat}) and Lemma~\ref{lem:V0}.
Next, the existence of
$v_1$ and $v_2$ and the bound (\ref{v012_der}) for $j=1,2$, $k=0$
are obtained by applying Lemma~\ref{lem_nu_psi} to problems (\ref{v_1}) and (\ref{v_2}),
in which the right-hand sides are estimated using (\ref{B_xxx}) with $m=1,2$
and also $|v_0|\le C''\chi$.
Similarly, the higher-order derivatives of $v_0$, $v_1$ and $v_2$ all satisfy problems of type
(\ref{nu}) with various data, so the bound (\ref{v012_der}) for them is obtained
by again applying Lemma~\ref{lem_nu_psi}.
\end{proof}

The main result of this section is as follows.

\begin{lemma}\label{lem_u_as}
For the asymptotic expansion $u_{\rm as}(x;p)$ from (\ref{u_as}) we have
\begin{subequations}
\begin{equation}\label{Fuas}
Fu_{\rm as}(x;p)=O(\eps^3)\qquad\mbox{for}\; x\in(0,1)\setminus\{t_0\}.
\end{equation}
Furthermore, there exist values of $t_1$ and $t_2$ in (\ref{V0_hat_V0}),
independent of $\eps$ and $p$, and
positive constants $C_1$, $C_2$ and $\eps^*=\eps^*(p^*)$
such that for all $\eps\le\eps^*$ and $0<|p|\le p^*$ we have
\begin{equation}\label{Phi_p}
({\rm sgn}p)\cdot\Phi[u_{\rm as}(\cdot;p)]\ge C_1 \eps |p|-C_2\eps^3.
\end{equation}
\end{subequations}
\end{lemma}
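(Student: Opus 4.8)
The plan is to verify the two assertions of the lemma separately, since they concern the interior residual and the matching functional respectively.

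For \eqref{Fuas}, I would compute $Fu_{\rm as} = -\eps^2 u_{\rm as}'' + b(x,u_{\rm as})$ directly on each subinterval $(0,t_0)$ and $(t_0,1)$ where everything is smooth. The idea is to Taylor-expand $b(x,u_0+\eps^2 u_2 + v_0 + \eps v_1 + \eps^2 v_2)$ about the smooth part $u_0+\eps^2 u_2$ in powers of the layer component, and simultaneously expand the coefficients $B$ about the point $\hat t_0$ in powers of $x-t_0 = \eps\xi$. Collecting terms by powers of $\eps$, the $O(1)$ terms cancel by the definition \eqref{v_0} of $v_0$ (recalling $v_0 = V_0 - u_0(\hat t_0)$), the $O(\eps)$ terms cancel by \eqref{v_1}, and the $O(\eps^2)$ terms cancel by the definitions \eqref{u02} of $u_2$ and \eqref{v_2} of $v_2$ together with the defining relation $F[u_0+\eps^2 u_2] = O(\eps^4)$. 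The key point to track carefully is that $B$ and its $x$-derivatives were frozen at $\hat t_0$, so each Taylor remainder in $x-t_0$ supplies a factor of $\eps\xi$; combined with the exponential decay bounds \eqref{v012_der}, the leftover terms are products of powers of $\eps\xi$ with $e^{-(\bar\gamma-\lambda)|\xi|}$, and since $\xi^m e^{-(\bar\gamma-\lambda)|\xi|}$ is uniformly bounded, each such term is $O(\eps^3)$ uniformly in $x$. This part is essentially bookkeeping given the construction.

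The substantive assertion is \eqref{Phi_p}. Here I would apply the matching functional $\Phi$ to $u_{\rm as}$ term by term. The smooth part $u_0+\eps^2 u_2$ contributes a jump in its first derivative across $t_0$ that is independent of $p$ and, when multiplied by $\eps$, of size $O(\eps)$; this is where $t_1$ and $t_2$ must be chosen. Indeed, the leading layer contribution is $\Phi[v_0] = \eps(\chi(0^-;p) - \chi(0^+;p))/1$ rescaled—more precisely, since $v_0(\xi;p)=V_0(\xi;p)-u_0(\hat t_0)$ and $\xi = (x-t_0)/\eps$, differentiating in $x$ introduces a factor $\eps^{-1}$, so $\Phi[v_0]$ is $O(1)$ and its dependence on $p$ enters through $V_0(\xi;p)=\hat V_0(\xi - \bar t_1 + p)$ with $\bar t_1 = t_1 + \eps t_2$. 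The plan is to use Lemma~\ref{lem_nu_psi}, whose formula \eqref{Phi_nu} expresses the derivative-jump of each $v_j$ as an integral of $\psi_j \chi$ plus boundary terms. Choosing $t_1$ and $t_2$ is precisely the requirement that the $O(1)$ and $O(\eps)$ parts of $\Phi[u_{\rm as}]$ vanish when $p=0$; the solvability (Fredholm) condition $\int \psi\chi\,d\xi = \cdots$ inherent in \eqref{Phi_nu} is what determines these values uniquely, and this is exactly the standard matching that fixes the layer location $t=t_0+\eps t_1+\eps^2 t_2$.

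The crux, and the main obstacle, is extracting the sign and size of the $p$-dependent part. After $t_1,t_2$ are fixed to annihilate the $p$-independent terms up to $O(\eps^3)$, I expect $\Phi[u_{\rm as}(\cdot;p)]$ to equal $\eps$ times a quantity whose leading behaviour in $p$ is governed by a derivative of the Melnikov-type integral $\int_{-\infty}^\infty B_x(\hat t_0,v_0)\chi\,d\xi$, which by the reduction to $\int_{\varphi_1(t_0)}^{\varphi_2(t_0)} b_x(t_0,v)\,dv = -C_I < 0$ from $(A5)$ carries a definite sign. Concretely, I would expand $\Phi[u_{\rm as}(\cdot;p)]$ in $p$ about $p=0$; the constant term is $O(\eps^3)$ by the choice of $t_1,t_2$, and the linear-in-$p$ term should come out as $C_1\eps\, p$ with $C_1>0$ tied to $C_I$ through the phase-plane identity $\int \hat\chi^2\,d\xi$-type normalization. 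The delicate points will be (i) confirming that differentiating $V_0(\xi;p)$ in $p$ produces exactly $\chi$ (since $\partial_p V_0 = \hat V_0' = \chi$), so the $p$-derivative of the Melnikov integral reduces cleanly, and (ii) controlling all remaining $\eps$-dependent and higher-order-in-$p$ contributions to be $O(\eps^3)$ uniformly for $|p|\le p^*$, which requires $\eps^*$ small depending on $p^*$ exactly as stated. Establishing that the sign of this linear term matches $({\rm sgn}\,p)$ via the negativity of $C_I$ is the heart of the argument.
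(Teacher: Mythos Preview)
Your treatment of \eqref{Fuas} is fine and matches the paper, which likewise dismisses it as a standard consequence of the construction.

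For \eqref{Phi_p}, however, you have misidentified where the $p$-dependence enters. You claim $\Phi[v_0]$ is $O(1)$ and carries the $p$-dependence, but in fact $\Phi[v_0]=0$ identically: since $v_0(\xi;p)=V_0(\xi;p)-u_0(\hat t_0)$ and $V_0$ is smooth across $\xi=0$, one has $\frac{d}{d\xi}v_0=\chi$ continuous at $0$, so the derivative jump vanishes. Consequently there is no $O(1)$ part of $\Phi[u_{\rm as}]$ at all; the leading contributions are $\Phi[u_0]=\eps[\varphi_1'(t_0)-\varphi_2'(t_0)]$ and $\eps\Phi[v_1]$, both of order $\eps$. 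The $p$-dependence of $\Phi[u_{\rm as}]$ at order $\eps$ therefore comes from $\Phi[v_1]$, not $\Phi[v_0]$, and it enters because $v_1$ is defined in \eqref{v_1} via $v_0(\xi;p)$. Your statement that $t_1,t_2$ are chosen to kill the ``$O(1)$ and $O(\eps)$ parts'' is correspondingly off by one power of $\eps$: they are chosen to kill the $O(\eps)$ and $O(\eps^2)$ parts at $p=0$.

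The paper computes $\Phi[v_1]$ via \eqref{Phi_nu}; the right-hand side of \eqref{v_1} is $-\xi B_x(\hat t_0,v_0)$, so the relevant integral is $\int_{-\infty}^\infty \xi\, b_x(t_0,V_0)\,\chi\,d\xi$ (note the factor $\xi$, which your Melnikov integral omits). After an integration by parts that disposes of the $u_0'(\hat t_0)b_u$ piece, the key device is the shift $\hat\xi=\xi-\bar t_1+p$: since $V_0(\xi;p)=\hat V_0(\hat\xi)$ and $\chi(\xi;p)=\hat\chi(\hat\xi)$, this yields the \emph{exact} linear dependence
\[
\int_{-\infty}^\infty \xi\, b_x(t_0,V_0)\,\chi\,d\xi
= C_{I\!I} - (\bar t_1 - p)\,C_I,
\]
with $C_I>0$ from $(A5)$. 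Choosing $t_1=C_{I\!I}/C_I$ then makes the $O(\eps)$ contribution to $\Phi[u_{\rm as}]$ exactly $\eps C_I p/\chi(0)$ plus an $O(\eps^2)$ residue, and $t_2$ is fixed analogously from $\Phi[v_2]$. This change-of-variable trick is cleaner than your proposed Taylor expansion in $p$: it gives the linear-in-$p$ term with no remainder to control, so the uniformity in $|p|\le p^*$ is automatic rather than requiring a separate argument.
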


\begin{proof}
The relation (\ref{Fuas}) is a standard outcome of the method of asymptotic
expansions that was applied to generate the terms in (\ref{u_as}).

To establish (\ref{Phi_p}), note that
(\ref{u02}) implies $\Phi[u_0]=\eps[\varphi_1'(t_0)-\varphi_2'(t_0)]$.
As we also have $\Phi[v_0]=\Phi[V_0]=0$ and $\Phi[\eps^2 u_2]=O(\eps^3)$, then
\begin{equation}\label{Phi_1}
\Phi[u_{\rm as}]=\eps[\varphi_1'(t_0)-\varphi_2'(t_0)]+
\eps\Phi[ v_1]+\eps^2 \Phi[v_2]+O(\eps^3).
\end{equation}
By applying (\ref{Phi_nu}) to problem (\ref{v_1}), we get
$$
\Phi[ v_1]%=-{\textstyle\frac{1}{\chi(0)}}
%\int_{-\infty}^\infty \psi\chi \,d\xi
={\textstyle\frac{1}{\chi(0)}}\int_{-\infty}^\infty \xi\,
B_x(\hat t_0,v_0)\,\chi \,d\xi.
$$
Note that
$B_x(x,v_0)=b_x(x,u_0(x)+v_0)+u_0'(x)\,b_u(x,u_0(x)+v_0)$ for $x\neq t_0$.
Combining this with $u_0(\hat t_0)+v_0=V_0$ yields
$B_x(\hat t_0,v_0)=b_x(t_0,V_0)+u_0'(\hat t_0)\,b_u(t_0,V_0)$
and thus
$$
\Phi[ v_1]={\textstyle\frac{1}{\chi(0)}}\!
\int_{-\infty}^\infty \!\!\!\xi\, b_x(t_0,V_0)\,\chi \,d\xi
+{\textstyle\frac{\varphi_1'(t_0)}{\chi(0)}}\!\int_{-\infty}^0 \!\!\!\xi\, b_u(t_0,V_0)\,\chi \,d\xi
+{\textstyle\frac{\varphi_2'(t_0)}{\chi(0)}}\!\int_{0}^\infty\!\!\! \xi\, b_u(t_0,V_0)\,\chi \,d\xi.
$$
For the second term on the right-hand side, we note
that $b_u(t_0,V_0)\chi={\textstyle\frac{d}{d\xi}}[b(t_0,V_0(\xi))] $,
so an integration by parts yields
$$
\int_{-\infty}^0\!\! \xi\, b_u(t_0,V_0)\,\chi \,d\xi
%=\int_{-\infty}^0\!\! \xi \,{\textstyle\frac{d}{d\xi}}[b(t_0,V_0(\xi))] \,d\xi
=-\int_{-\infty}^0\!\! b(t_0,V_0) \,d\xi
=-\int_{-\infty}^0\!\! \chi'(\xi) \,d\xi=-\chi(0).
$$
Combining this with a similar estimate for the third term, we arrive at
\begin{equation}\label{Phi_v1a}
\Phi[ v_1]={\textstyle\frac{1}{\chi(0)}}\!
\int_{-\infty}^\infty \!\!\xi\, b_x(t_0,V_0)\,\chi \,d\xi
-[\varphi_1'(t_0)-\varphi_2'(t_0)].
\end{equation}
For the first term here,
recall (\ref{V0_hat_V0}) and therefore
switch to the variable $\hat\xi=\xi-\bar t_1+p$
so that the resulting integral involves the functions $\hat V_0$
and $\hat\chi$,
which are independent of $\bar t_1$ and $p$, rather than
$V_0(\xi;p)$ and $\chi(\xi;p)$:
%to explicitly show the dependence on $\bar t_1=t_1+\eps t_2$
%and $p$:
\begin{equation}\label{Phi_v1b}
\int_{-\infty}^\infty \xi \,b_x(t_0,V_0)\,\chi \,d\xi
=\int_{-\infty}^\infty \!(\hat\xi+\bar t_1-p)
\,b_x(t_0,\hat V_0(\hat \xi))\,\hat \chi(\hat \xi) \,d\hat\xi
=C_{I\!I}-(\bar t_1-p)C_I.
\end{equation}
Here
$$
C_{I\!I}=\int_{-\infty}^\infty \!\hat\xi
\,b_x(t_0,\hat V_0(\hat \xi))\,\hat \chi(\hat \xi) \,d\hat\xi
$$
is a fixed constant,  independent of $\bar t_1$ and $p$,
and
$$
C_I=-\int_{-\infty}^\infty \!
b_x(t_0,\hat V_0(\hat \xi))\,\hat \chi(\hat \xi) \,d\hat\xi
=-\int_{\varphi_1(t_0)}^{\varphi_2(t_0)}  b_x(t_0,v) \,dv>0
$$
is a positive constant that appears in Assumption (A5).
We now choose $t_1:=C_{I\!I}/C_I$ so that
$\bar t_1=C_{I\!I}/C_I+\eps t_2$ and therefore
$C_{I\!I}-(\bar t_1-p)C_I=(p-\eps t_2)C_I$.
Combining this with (\ref{Phi_v1a}) and (\ref{Phi_v1b}) yields
$$
\Phi[ v_1]={\textstyle\frac{1}{\chi(0)}}\!
(p-\eps t_2)C_I
-[\varphi_1'(t_0)-\varphi_2'(t_0)].
$$
Substituting this result in (\ref{Phi_1}), we arrive at
$$
\Phi[u_{\rm as}]=
\eps {\textstyle\frac{1}{\chi(0)}}\!
(p-\eps t_2)C_I
+\eps^2 \Phi[v_2]+O(\eps^3).
$$
Now, by applying (\ref{Phi_nu}) to problem (\ref{v_2}), we get
 $\Phi[v_2]=\frac{1}{\chi(0)}[C_{I\!I\!I}+O(p+\eps |t_2|)]$,
 where $C_{I\!I\!I}$ equals the expression in the parentheses of formula (\ref{Phi_nu})
 evaluated using the data of  problem (\ref{v_2}) in the case of $p=0$
 and $\bar t_1=t_1$; thus $C_{I\!I\!I}$ is independent of $p$ and $\eps$.
Now choosing $t_2:=C_{I\!I\!I}/C_I$ yields
\begin{equation}\label{Phi_2}
\Phi[u_{\rm as}]
%={\textstyle\frac{1}{\chi(0)}}
%(\eps C_I(p-\eps t_2)
%+\eps^2 [C_{I\!I\!I}+O(p+\eps |t_2|)])+O(\eps^3)
={\textstyle\frac{1}{\chi(0)}}\eps C_Ip+O(\eps^2 p)+O(\eps^3).
\end{equation}
Note that there exists $C'$ such that
$\chi(0)=\hat\chi(-p+t_1+\eps t_2)$ satisfies $\frac1{\chi(0)}\ge C'$ for all
$\eps\le 1$ and $|p|\le p^*$. Thus choose $C_1:=\frac12 C_I C'$
so that ${\textstyle\frac{1}{\chi(0)}}\eps C_I\ge 2\eps C_1$.
Finally, by choosing
 $\eps^*$ sufficiently small and $C_2$ sufficiently large
 so that  the $O$ terms in (\ref{Phi_2}) satisfy
 $|O(\eps^2 p)|\le \eps C_1|p|$ and $|O(\eps^3)|\le C_2\eps^3$,
 we establish (\ref{Phi_p}).
\end{proof}

Note that the numerical solution of problem~\eqref{1.1}
presents substantial difficulties and instabilities \cite{KoStMain}.
In that paper we describe a special numerical treatment
for particularly small values of $\eps$ that is
based on the following result:
\begin{lemma}
Let $ \tau =
{\textstyle\frac {C_{\tau}}{\bar\gamma}}\,\eps \ln N$
for some $C_\tau>2$ and $N\ge 2$.
%and $\eps\le C N^{-2}\ln^2 N$.
Then
the asymptotic expansion $u_{\rm as}(x;0)$  of (\ref{u_as})
can be written as
$$
u_{\rm as}(x;0)={\mathcal U}(x,\eps)+O(\eps\ln N+N^{-2}),
\quad
{\mathcal U}(x,\eps):=
\left\{\begin{array}{cl}
V_0(\frac{x-t_0}{\eps};0),& |x-t_0|\le\tau,\\
u_0(x), & |x-t_0|>\tau.
\end{array}
\right.
$$
\end{lemma}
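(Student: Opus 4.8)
The plan is to estimate $u_{\rm as}(x;0)-\mathcal U(x,\eps)$ separately on the inner region $|x-t_0|\le\tau$ and the outer region $|x-t_0|>\tau$, exploiting the exponential decay (\ref{v012_der}) of the layer functions together with the identity (\ref{v0_V0}). Throughout I would fix a single $\lambda\in(0,\bar\gamma)$ small enough that $(\bar\gamma-\lambda)C_\tau/\bar\gamma\ge 2$; this is possible precisely because $C_\tau>2$, and it is the one quantitative ingredient that turns the hypothesis on $C_\tau$ into the stated $N^{-2}$ rate. I would also use throughout that $\eps\le 1$ and $\ln N\ge\ln2>0$, so that $\eps^2=O(\eps\ln N)$, which lets the smooth $O(\eps^2)$ corrections be absorbed into the claimed bound.

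On the inner region $\mathcal U=V_0((x-t_0)/\eps;0)=V_0(\xi;0)$. Substituting the definition (\ref{u_as}) and using $v_0(\xi;0)=V_0(\xi;0)-u_0(\hat t_0)$ from (\ref{v0_V0}), the term $V_0$ cancels and one is left with
$$
u_{\rm as}(x;0)-\mathcal U(x,\eps)=[u_0(x)-u_0(\hat t_0)]+\eps^2u_2(x)+\eps\, v_1(\xi;0)+\eps^2 v_2(\xi;0).
$$
Here $u_0=\varphi_k$ on the relevant side, so $u_0(x)-u_0(\hat t_0)=\varphi_k(x)-\varphi_k(t_0)=O(|x-t_0|)=O(\tau)=O(\eps\ln N)$ by the smoothness of $\varphi_k$ from $(A1)$; the remaining terms are $O(\eps)$ because $u_2$ is bounded and $|v_1|,|v_2|\le C_\lambda$ by (\ref{v012_der}). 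Hence the inner error is $O(\eps\ln N)$.

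On the outer region $\mathcal U=u_0(x)$, so
$$
u_{\rm as}(x;0)-\mathcal U(x,\eps)=\eps^2u_2(x)+v_0(\xi;0)+\eps\, v_1(\xi;0)+\eps^2 v_2(\xi;0),
$$
and the three layer terms are all controlled by (\ref{v012_der}). Since $|\xi|=|x-t_0|/\eps>\tau/\eps=(C_\tau/\bar\gamma)\ln N$, each satisfies $|v_j|\le C_\lambda e^{-(\bar\gamma-\lambda)|\xi|}\le C_\lambda N^{-(\bar\gamma-\lambda)C_\tau/\bar\gamma}\le C_\lambda N^{-2}$ by the choice of $\lambda$, and the prefactors $\eps,\eps^2$ only improve these bounds. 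Together with $\eps^2u_2=O(\eps^2)=O(\eps\ln N)$, the outer error is $O(N^{-2}+\eps\ln N)$. Combining the two regions yields the claimed bound $O(\eps\ln N+N^{-2})$; note that $\mathcal U$ need not be continuous across $|x-t_0|=\tau$, but this is harmless since the estimate is required only pointwise.

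The only genuinely delicate point is the calibration of $\lambda$ against $C_\tau$ in the outer estimate: the available decay rate in (\ref{v012_der}) is $\bar\gamma-\lambda$ rather than the full $\bar\gamma$, so one must verify that the slack $C_\tau>2$ survives the loss of $\lambda$, which it does as soon as $\lambda<\bar\gamma(1-2/C_\tau)$. Everything else reduces to the elementary bounds above.
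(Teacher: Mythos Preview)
Your proof is correct and follows essentially the same route as the paper: split into inner and outer regions, use the identity $v_0=V_0-u_0(\hat t_0)$ in the inner region to reduce to $|u_0(x)-u_0(\hat t_0)|=O(\tau)$, and in the outer region exploit the exponential decay with $\lambda$ chosen so that $(\bar\gamma-\lambda)C_\tau/\bar\gamma\ge 2$. The paper's version is slightly terser---it lumps $\eps^2u_2+\eps v_1+\eps^2 v_2$ into a single $O(\eps)$ at the outset and then only tracks $u_0+v_0$---whereas you keep all terms explicit; but the argument and the key calibration of $\lambda$ against $C_\tau$ are identical.
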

\begin{proof}
Note that (\ref{u_as}) immediately  implies that
$u_{\rm as}(x;0)=u_0(x)+v_0(\xi;0)+O(\eps)$.
%Consider two cases.

\noindent
(i) Let $|x-t_0|\le \tau$. By (\ref{v0_V0}),
$u_{\rm as}(x;0)=V_0(\xi;0)+[u_0(x)-u_0(\hat t_0)]+O(\eps)$.
Here, by virtue of (\ref{u02}) and (\ref{t_0hat}),
one has $|u_0(x)-u_0(\hat t_0)|\le C|x-t_0|\le C\tau$.
Consequently
$u_{\rm as}(x;0)=V_0(\xi;0)+O(\tau+\eps)$,
which yields the desired result.

\noindent (ii) Now let  $|x-t_0|>\tau$, i.e.,  $|\xi|>\tau/\eps$.
Note that (\ref{v0_sign}) combined with (\ref{chi_decay}) yields
$|v_0|\le  C''  C_\lambda\,
e^{-(\bar\gamma-\lambda)|\xi|}$.
Choosing $\lambda$ sufficiently small so that
$C_\tau(1-\lambda/\bar\gamma)\ge 2$, one gets
$e^{-(\bar\gamma-\lambda)\tau/\eps}\le N^{-2}$. %by  the definition of $\tau$.
Consequently
$u_{\rm as}(x;0)=u_0(x)+O(N^{-2}+\eps)$ and the desired result follows.
\end{proof}

\section{Perturbed asymptotic expansion, sub- and super-solutions}~\label{sec_per_as_exp}
For the numerical analysis that appears in \cite{KoStMain}
we now perturb the asymptotic expansion $u_{\rm as}(x;p)$ of (\ref{u_as})
as follows: set
\begin{equation}\label{beta_def}
\beta(x)=
\beta(x;p,p',\hat h):=u_{\rm as}(x;p)
+ p'\, [v_*(\xi;p)+C_0]+\hat h^2\, z(\xi;p).
\end{equation}
Clearly $\beta$ is a small perturbation of $u_{\rm as}$ when
the parameters $p'$ and $\hat h$ are small.
In this definition,
the parameter $\hat h$ is related to the mesh used in~\cite{KoStMain}
as the component $\hat h^2 \,z(\xi;p)$ is added to compensate for the principal
part of the truncation error produced when the finite difference operators of~\cite{KoStMain}
are applied to $u_{\rm as}(x,t)$.
The component
 $p' [v_*(\xi;p)+C_0]$ is added
to ensure that $({\rm sgn}p')\cdot F(u_{\rm as}+ p' [v_*+C_0])\ge 0$.

The functions $v_*(\xi)=v_*(\xi;p)$ and $z(\xi)=z(\xi;p)$ used in (\ref{beta_def})
are defined by
\begin{equation}\label{v_star}
[-{\textstyle\frac{d^2}{d \xi^2}}+ B_s(\hat t_0,v_0)]\,v_*=
|v_0|
%={\color{red}\left\{\begin{array}{rl}
%v_0&\mbox{for}\;\xi<0\\
%-v_0&\mbox{for}\;\xi>0
%\end{array}\right.}
,\quad\mbox{for}\;\xi\in\mathbb{R}\setminus\{0\},
\qquad
v_*(0)=v_*(\pm\infty)=0,
\end{equation}
\begin{equation}\label{z_prob}
[-{\textstyle\frac{d^2}{d \xi^2}}+ B_s(\hat t_0,v_0)]\,z=
{\textstyle\frac1{12}\frac{d^4}{d\xi^4}}V_0
\quad\mbox{for}\;\xi\in\mathbb{R},
\qquad
z(0)=z(\pm\infty)=0.
\end{equation}
The functions $v_*$ and $z$ depend on $p$ since they are defined using $v_0(\xi;p)$
and $V_0(\xi;p)$.

\begin{lemma}
Assume that $|p|\le p^*$ for some positive constant $p^*$. Then
there exist solutions $v_*$ and $z$ of problems (\ref{v_star}) and
(\ref{z_prob}) respectively, and for any arbitrarily small but fixed
$\lambda\in(0,\bar\gamma)$, there is a constant $C_\lambda$ such
that
%for $k=1,\ldots,6$ one has
%
\begin{equation}\label{v_star_z}
v_*\ge 0, \quad
\bigl|{\textstyle\frac{d^k   }{d \xi^k}}v_* \bigr|+
\bigl|{\textstyle\frac{d^k }{d \xi^k}} z  \bigr|
\le C_\lambda  e^{-(\bar\gamma-\lambda)|\xi|}
\quad\mbox{for}\;\;\xi\in\mathbb{R}\setminus\{0\},\;\;k=0,\ldots,4.
\end{equation}
Furthermore, %for the function $\beta$ from (\ref{beta_def}) we have
there exist
positive constants $C_1$, $C_2$, $C_3$ and $\eps^*=\eps^*(p^*)$
such that for all $\eps\le\eps^*$ and $0<|p|\le p^*$ we have
\begin{equation}\label{Phi_beta}
({\rm sgn}p)\cdot \Phi[\beta(x;p,p',\hat h)]\ge
C_1 \eps |p|-C_2\eps^3-C_3 |p'|.
%\quad \mbox{if}\quad {\rm sgn}p={\rm sgn}p'.
\end{equation}
\end{lemma}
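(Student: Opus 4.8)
The plan is to treat \eqref{v_star} and \eqref{z_prob} as two further instances of the generic problem \eqref{nu} with $\nu_0^\pm=0$, and then to exploit the linearity of $\Phi$ together with the matching identity \eqref{Phi_nu} of Lemma~\ref{lem_nu_psi}.

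\textbf{Existence and the decay bounds \eqref{v_star_z}.} For $v_*$ the data is $\psi=|v_0|$, which satisfies $|\psi|\le C''\chi$ by \eqref{v0_sign}; hence Lemma~\ref{lem_nu_psi} (case $k=0$) yields a solution with $|v_*|\le C(1+|\xi|)\chi$, and its final sign assertion gives $v_*\ge0$ since $\psi\ge0$ and $\nu_0^\pm=0$. For $z$ the data is $\psi=\frac{1}{12}\frac{d^4}{d\xi^4}V_0$; since $V_0=u_0(\hat t_0)+v_0$ with $u_0(\hat t_0)$ piecewise constant, one has $\frac{d^4}{d\xi^4}V_0=\frac{d^4}{d\xi^4}v_0$ for $\xi\ne0$, which is controlled by \eqref{v012_der} and, through the proof of Lemma~\ref{lem:vj}, by $C(1+|\xi|^m)\chi$ for some $m$, so Lemma~\ref{lem_nu_psi} again applies. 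I would then pass from these $(1+|\xi|^\cdot)\chi$ bounds to \eqref{v_star_z} by absorbing the polynomial factor into the exponential at the price of enlarging $\lambda$, using \eqref{chi_decay}. The derivative bounds for $k=1,\dots,4$ follow by differentiating the defining equations: each $\frac{d^k}{d\xi^k}v_*$ and $\frac{d^k}{d\xi^k}z$ solves a problem of type \eqref{nu} whose data is again bounded by (polynomial)$\times\chi$, the highest-order input being $\frac{d^6}{d\xi^6}v_0$, which is exactly what \eqref{v012_der} supplies up to $k=6$.

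\textbf{The matching estimate \eqref{Phi_beta}.} By linearity and $\Phi[C_0]=0$,
$$
\Phi[\beta]=\Phi[u_{\rm as}]+p'\,\Phi[v_*]+\hat h^2\,\Phi[z].
$$
The first term is handled by the already-proven \eqref{Phi_p}, giving $({\rm sgn}\,p)\,\Phi[u_{\rm as}]\ge C_1\eps|p|-C_2\eps^3$. For $v_*$, formula \eqref{Phi_nu} with $\nu_0^\pm=0$ gives $\Phi[v_*]=-\frac{1}{\chi(0)}\int_{-\infty}^\infty|v_0|\,\chi\,d\xi$, which is bounded uniformly for $|p|\le p^*$ (use $|v_0|\le C''\chi$, $\int\chi^2<\infty$, and $\frac{1}{\chi(0)}\le C$, the latter because $\chi(0)=\hat\chi(-p+t_1+\eps t_2)$ with $\hat\chi$ positive and continuous on the relevant bounded interval, as in the proof of Lemma~\ref{lem_u_as}); hence $({\rm sgn}\,p)\,p'\,\Phi[v_*]\ge-C_3|p'|$ with $C_3:=\sup_{|p|\le p^*}|\Phi[v_*]|$. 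The decisive point is that $\Phi[z]=0$: by \eqref{Phi_nu} with $\nu_0^\pm=0$ one has $\Phi[z]=-\frac{1}{12\chi(0)}\int_{-\infty}^\infty\frac{d^4}{d\xi^4}V_0\,\chi\,d\xi$, and since $\chi=\frac{d}{d\xi}V_0$ is globally smooth (Lemma~\ref{lem:V0}) the integrand equals $\chi'''\chi$, whose integral over $\mathbb{R}$ vanishes after two integrations by parts because $\chi$ and its derivatives decay at $\pm\infty$. This is exactly why no $\hat h$ term appears on the right of \eqref{Phi_beta}. Combining the three contributions yields \eqref{Phi_beta}, with the same $\eps^*=\eps^*(p^*)$ as in Lemma~\ref{lem_u_as}.

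\textbf{Main obstacle.} The only genuinely structural step is the vanishing of $\Phi[z]$; everything else is a routine application of Lemmas~\ref{lem_nu_psi} and \ref{lem:vj} and of estimate \eqref{Phi_p}. The subtlety worth flagging is that $B_s(\hat t_0,v_0)=b_u(t_0,V_0)$ is in fact continuous across $\xi=0$ (the jumps in $\hat t_0$ and in $v_0$ cancel), so both $\chi$ and the data $\frac{d^4}{d\xi^4}V_0$ are globally smooth and the integration by parts in $\int_{-\infty}^\infty\chi'''\chi\,d\xi$ carries no spurious boundary term at $\xi=0$; this is consistent with \eqref{z_prob} being posed on all of $\mathbb{R}$, so that $z$ has no derivative jump there, whereas \eqref{v_star} is posed only on $\mathbb{R}\setminus\{0\}$ and $v_*$ genuinely does jump.
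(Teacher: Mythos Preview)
Your argument is correct and follows the paper's own proof almost verbatim: both apply Lemma~\ref{lem_nu_psi} to \eqref{v_star} and \eqref{z_prob} for existence and decay, use \eqref{Phi_nu} to show $\Phi[z]=0$ via $\int\chi'''\chi\,d\xi=0$, bound $\Phi[v_*]$, and then combine with \eqref{Phi_p}. The only cosmetic difference is that the paper evaluates $\Phi[v_*]$ exactly---using $|v_0|\chi=\pm\tfrac12(v_0^2)'$ on each half-line to obtain a closed form in terms of $[v_0(0^\pm)]^2$---whereas you simply bound $\int|v_0|\chi\,d\xi$ by $C''\int\chi^2\,d\xi$; either route gives $|\Phi[v_*]|\le C_3$, which is all that \eqref{Phi_beta} requires.
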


\begin{proof}
The existence and properties (\ref{v_star_z}) of $v_*$ and $z$ are obtained by
applying Lemma~\ref{lem_nu_psi}
to problems (\ref{v_star}) and (\ref{z_prob}), respectively.
Furthermore,  the relation (\ref{Phi_nu}) from this lemma
yields the values of $\Phi[v_*]$ and $\Phi[z]$.
For $\Phi[v_*]$, using the sign property of $v_0$ from (\ref{v0_sign}),
we get
\begin{eqnarray}\label{Phi_v_star}
\Phi[ v_*]&=&-{\textstyle\frac{1}{\chi(0)}}\int_{-\infty}^\infty \!|v_0|\chi \,d\xi
=-{\textstyle\frac{1}{\chi(0)}}\bigl(\int_{-\infty}^0 \!v_0\chi \,d\xi
-\int_{0}^\infty\! v_0\chi \,d\xi\bigr)
\\\nonumber
&=&-{\textstyle\frac{1}{\chi(0)}}\bigl([v_0(0^-)]^2+[v_0(0^+)]^2\bigr)\ge -C_3
\qquad\mbox{for all~~} |p|\le p^*.
\end{eqnarray}
Here, in view of (\ref{chi_def}),
we used $v_0\chi=\frac12 (v_0^2)'$ and $v_0(\pm\infty)=0$,
and it was understood that $\chi(0)=\chi(0;p)$ and $v_0(0^\pm)=v_0(0^\pm;p)$.
Next, for $\Phi[z]$,
noting that $\frac{d^4}{d\xi^4}V_0=\chi'''$
and $\chi'=V_0''=b(t_0,V_0)$,   we get
$$
\Phi[z]%=-\chi^{-1}(0)\int_{-\infty}^\infty
%{\textstyle\frac1{12}\frac{\partial^4}{\partial\xi^4}}V_0\chi \,d\xi
=-{\textstyle\frac1{12\chi(0)}}\int_{-\infty}^\infty
\chi'''\chi \,d\xi
={\textstyle\frac1{12\chi(0)}}\int_{-\infty}^\infty
\chi''\chi' \,d\xi
={\textstyle\frac1{24\chi(0)}}[\chi'(\xi)]\Bigr|_{-\infty}^\infty=0.
$$
Thus
$\Phi[\beta]=\Phi[u_{\rm as}]+ p'\Phi[v_*]$, and combining this with
(\ref{Phi_v_star}) and (\ref{Phi_p})
yields (\ref{Phi_beta}).
\end{proof}

\begin{lemma}
There exist  positive constants $C_0$, $C_4$,
$p'^*$ and $\eps^*$ such that for all
$x\in(0,1)\setminus\{t_0\}$, $\eps\le\eps^*$,  $|p|\le p^*$, $0<|p'|\le p'^*$,
the function $\beta$ of (\ref{beta_def}) satisfies
\begin{equation}\label{Fbeta_C0}
({\rm sgn}p')\cdot[F\beta-{\textstyle\frac{\hat h^2}{12}\frac{d^4}{d\xi^4}}V_0]
\ge {\textstyle\frac12}C_0  |p'| \gamma^2
-C_4(\eps^3+\eps \hat h^2+  \hat h^4).
\end{equation}
\end{lemma}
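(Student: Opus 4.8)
The plan is to write $\beta=u_{\rm as}+w$ with $w:=p'[v_*+C_0]+\hat h^2 z$, insert this into $F$, expand $b$ by Taylor's theorem, and use the defining equations (\ref{v_star}),\,(\ref{z_prob}) to convert the awkward second-derivative term $-\eps^2 w''$ into lower-order quantities. Since $\xi=(x-t_0)/\eps$ gives $-\eps^2\frac{d^2}{dx^2}=-\frac{d^2}{d\xi^2}$ on functions of $\xi$, and $C_0$ is constant, the equations for $v_*$ and $z$ yield $-\eps^2 w''=p'|v_0|+\frac{\hat h^2}{12}\frac{d^4}{d\xi^4}V_0-b_u(t_0,V_0)[p'v_*+\hat h^2 z]$, where I use $B_s(\hat t_0,v_0)=b_u(t_0,V_0)$. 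Writing $b(x,u_{\rm as}+w)=b(x,u_{\rm as})+b_u(x,u_{\rm as})w+\frac12 b_{uu}(x,\tilde u)w^2$ and recalling $Fu_{\rm as}=O(\eps^3)$ from (\ref{Fuas}), I would obtain the identity
\[
F\beta-\tfrac{\hat h^2}{12}\tfrac{d^4}{d\xi^4}V_0
= Fu_{\rm as}+p'|v_0|+p'C_0\,b_u(x,u_{\rm as})
+\bigl[p'v_*+\hat h^2 z\bigr]\bigl[b_u(x,u_{\rm as})-b_u(t_0,V_0)\bigr]
+\tfrac12 b_{uu}(x,\tilde u)w^2.
\]

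Multiplying by ${\rm sgn}\,p'$, the first two explicit terms combine into the main positive contribution $|p'|\bigl[|v_0|+C_0\,b_u(x,u_{\rm as})\bigr]$. The key step, and the main obstacle, is to show that for a suitably small fixed $C_0>0$,
\[
|v_0(\xi)|+C_0\,b_u(x,u_{\rm as})\ge C_0\gamma^2
\qquad\text{for } x\in(0,1)\setminus\{t_0\},\ |p|\le p^*,\ \eps\le\eps^*.
\]
Where $b_u(x,u_{\rm as})\ge\gamma^2$ this is immediate since $|v_0|\ge0$. Where $b_u(x,u_{\rm as})<\gamma^2$, Assumption (A3) together with compactness of $[0,1]$ forces $u_{\rm as}$ to stay a fixed distance $\delta_0$ from both $\varphi_1(x)$ and $\varphi_2(x)$; since $u_{\rm as}-\varphi_k(x)=v_0+O(\eps)$ on the relevant half ($k=1$ for $x<t_0$, $k=2$ for $x>t_0$) by (\ref{u02}),\,(\ref{v0_V0}) and the bounds of Lemma~\ref{lem:vj}, this gives $|v_0|\ge\frac12\delta_0$ for $\eps$ small. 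Choosing $C_0\le\delta_0/[2(\gamma^2+\max|b_u|)]$ then delivers the bound. This is exactly where the bistable structure (A3)--(A4) is essential: $|v_0|$ compensates precisely in the transition zone where $b_u$ may be negative.

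It then remains to control the error terms. The quadratic term satisfies $|w|\le C(|p'|+\hat h^2)$ by (\ref{v_star_z}) and boundedness of $v_*,z,C_0$, so $|\frac12 b_{uu}w^2|\le C(|p'|^2+\hat h^4)$ after handling the cross term via $|p'|\hat h^2\le\frac12(|p'|^2+\hat h^4)$. For the remaining cross term I would use $|b_u(x,u_{\rm as})-b_u(t_0,V_0)|\le C\eps(1+|\xi|)$, which follows from $|x-t_0|=\eps|\xi|$ and $|u_{\rm as}-V_0|=O(\eps(1+|\xi|))$ with the smoothness of $b$; combined with the exponential decay $|v_*|+|z|\le C_\lambda e^{-(\bar\gamma-\lambda)|\xi|}$ of (\ref{v_star_z}) and boundedness of $(1+|\xi|)e^{-(\bar\gamma-\lambda)|\xi|}$, this gives $|p'v_*[\cdots]|\le C\eps|p'|$ and $|\hat h^2 z[\cdots]|\le C\eps\hat h^2$.

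Assembling everything, together with $Fu_{\rm as}=O(\eps^3)$, I expect
\[
({\rm sgn}\,p')\bigl[F\beta-\tfrac{\hat h^2}{12}\tfrac{d^4}{d\xi^4}V_0\bigr]
\ge C_0\gamma^2|p'|-C\eps|p'|-C|p'|^2-C\eps^3-C\eps\hat h^2-C\hat h^4.
\]
Finally I would absorb $C\eps|p'|+C|p'|^2\le(C\eps^*+Cp'^*)|p'|\le\frac12 C_0\gamma^2|p'|$ by taking $\eps^*$ and $p'^*$ sufficiently small, and set $C_4$ equal to the remaining generic constant, which establishes (\ref{Fbeta_C0}). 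The only genuinely delicate point is the lower bound in the second display; all the other estimates are routine applications of the decay and smoothness bounds already proved.
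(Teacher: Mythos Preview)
Your proof is correct and its overall skeleton---Taylor-expand $b$ about $u_{\rm as}$, use the defining equations for $v_*$ and $z$ to trade $-\eps^2 w''$ for $p'|v_0|+\frac{\hat h^2}{12}V_0^{(4)}$ plus lower-order coupling terms, then absorb $O(\eps|p'|+|p'|^2)$ by shrinking $\eps^*,p'^*$---matches the paper's. The one substantive difference is in how you secure the main positive term.

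The paper evaluates $b_u$ not at $u_{\rm as}$ but at $u_0$: it writes $B_s(x,v_0)=B_s(x,0)-\lambda(x)|v_0|$ with $|\lambda|\le C_5$ (a mean-value step exploiting the sign of $v_0$), so that adding the constant $C_0p'$ contributes $C_0p'\,b_u(x,u_0(x))$, and $(A3)$ gives $b_u(x,u_0)>\gamma^2$ \emph{directly}. Choosing $C_0=C_5^{-1}$ then makes the combined coefficient of $|v_0|$, namely $(1-C_0\lambda(x))$, nonnegative, and no dichotomy is needed. Your route instead keeps $b_u(x,u_{\rm as})$ and argues by a case split: either $b_u(x,u_{\rm as})\ge\gamma^2$, or compactness forces $u_{\rm as}$ to be $\delta_0$-far from $\varphi_1,\varphi_2$, whence $|v_0|\ge\frac12\delta_0$. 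Both yield the same inequality $|v_0|+C_0\,b_u\ge C_0\gamma^2$ for $C_0$ small. The paper's argument is slightly slicker (one algebraic identity replaces your case analysis and the compactness step), while yours is perhaps more transparent about \emph{why} the $|v_0|$ term is there: it supplies positivity precisely in the transition zone where $b_u$ may fail $(A3)$. The remaining error bookkeeping is equivalent in the two proofs.
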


\begin{proof}
We partly imitate the analysis of ~\cite[Lemma~3.2]{KoSt04}.
As, by (\ref{Fuas}), we have
$Fu_{\rm as}=O(\eps^3)$, it suffices to estimate $F\bigr|^{\beta}_{u_{\rm as}}$,
where we use the notation $F\bigr|_{v}^{w}:=Fw-Fv$
 for any two functions $v$ and $w$.
Noting that for $u_{\rm as}$ of (\ref{u_as}) we have $u_{\rm as}=u_0+v_0+O(\eps)$,
which, by (\ref{B_def}), implies
$b_u(x,u_{\rm as})=B_s(x,v_0)+O(\eps)$, we obtain
$$
F\bigr|_{u_{\rm as}}
^{u_{\rm as}+ p' v_*+\hat h^2 z}
=- p' {\textstyle\frac{d^2}{d \xi^2}}v_*-\hat h^2 {\textstyle\frac{d^2}{d \xi^2}}z
+ (p' v_*+\hat h^2 z)[B_s(x,v_0)
+O( \eps+p'+\hat h^2)].
$$
Next, using (\ref{v_star}) and (\ref{z_prob})
for ${\textstyle\frac{d^2}{d \xi^2}}v_*$ and ${\textstyle\frac{d^2}{d \xi^2}}z$
and then noting that for $x\neq t_0$ we have
$|B_s(x,v_0)-B_s(\hat t_0,v_0)|\le C|x-t_0||v_0|\le C\eps$,
yields
%$$
%{}=p'|v_0|+{\textstyle\frac{\hat h^2}{12}\frac{d^4}{d\xi^4}}V_0
%+(p' v_*+\hat h^2 e)[B_s(x,v_0)-B_s(\hat t_0,v_0)]
%+[p'+\hat h^2]O( \eps+p'+\hat h^2)
%$$
\begin{equation}\label{Fuas_a}
F\bigr|_{u_{\rm as}}
^{u_{\rm as}+ p' v_*+\hat h^2 z}=
p'|v_0|+{\textstyle\frac{\hat h^2}{12}\frac{d^4}{d\xi^4}}V_0
+[p'+\hat h^2]O( \eps+p'+\hat h^2).
\end{equation}
Similarly, as $b_u(x,u_{\rm as}+ p' v_*+\hat h^2 z)=B_s(x,v_0)+O(\eps +p'+\hat h^2)$
and $B_s(x,v_0)=B_s(x,0)-\lambda (x)|v_0|$,
where $|\lambda (x)|\le C_5$ for some $C_5$, we get
$$
F\bigr|^{\beta}
_{u_{\rm as}+ p' v_*+\hat h^2 z}
=F\bigr|^{u_{\rm as}+ p' v_*+\hat h^2 z+C_0p'}
_{u_{\rm as}+ p' v_*+\hat h^2 z}
=C_0 p'[B_s(x,0)-\lambda (x)|v_0|
+O( \eps +p'+\hat h^2)].
$$
Combining this with (\ref{Fuas_a}) and $Fu_{\rm as}=O(\eps^3)$, yields
$$
F\beta-{\textstyle\frac{\hat h^2}{12}\frac{d^4}{d\xi^4}}V_0=
C_0 p'B_s(x,0)+p'|v_0|(1-C_0 \lambda (x))+O(\eps p'+ p'^2)
+O(\eps^3+\eps \hat h^2+ \hat h^4).
$$
Note that here $B_s(x,0)=b_u(x,u_0(x))>\gamma^2$ for $x\neq t_0$, by Assumption (A3), so
$C_0 |p'|B_s(x,0)\ge C_0|p'|\gamma^2$.
Now, choosing $C_0=C_5^{-1}$ so that $(1-C_0 \lambda (x))\ge 0$,
and also $p'^*$ and $\eps^*$ sufficiently small so that
$|O(\eps p'+ p'^2)|\le \frac12 C_0|p'|\gamma^2$,
we get the desired assertion (\ref{Fbeta_C0}) for some constant $C_4$.
\end{proof}

\begin{lemma}\label{lem_beta_monot}
Let $p\ge 0$, $p'=C'\eps p$ for some positive constant $C'$,
and
$\hat h^2\le C\eps^\mu$ for some fixed $\mu\in(0,1]$. Then
there exists $\eps^*=\eps^*(C',\mu)$ such that for the function
$\beta$ from (\ref{beta_def}) we have
\begin{equation}\label{beta_monot}
\beta(x;-p,-p',\hat h)\le \beta(x;p,p',\hat h)
\qquad\mbox{for}\;\;x\in[0,1],\;\; \eps\le \eps^*,\;\; |p|\le p^*.
\end{equation}
Furthermore, for any arbitrarily small but fixed $\lambda\in(0,\bar\gamma)$,
there is a constant $C_\lambda$ such that $u_{\rm as}$ from (\ref{u_as}) satisfies
\begin{equation}\label{beta_u_as}
|\beta(x;\pm p,\pm p',\hat h)-u_{\rm as}(x;0)|\le
C_\lambda (|p|+\hat h^2 )e^{-(\bar\gamma-\lambda)|\xi|}
+ C \eps|p|.
\end{equation}
\end{lemma}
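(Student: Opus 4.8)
The plan is to reduce both assertions to Lipschitz-in-$p$ bounds for the layer functions, obtained by differentiating their defining problems in $p$ and re-using Lemma~\ref{lem_nu_psi}. The starting point is that, by (\ref{V0_hat_V0}) and (\ref{v0_V0}), the entire $p$-dependence of $V_0$, $\chi$ and $v_0$ enters through the shift $\xi\mapsto\xi+p$ in $\hat V_0$, so $\partial_p v_0=\partial_p V_0=\chi$ and each $\xi$-derivative of $\chi$ is $O(\chi)$. Differentiating (\ref{v_1}), (\ref{v_2}), (\ref{v_star}) and (\ref{z_prob}) with respect to $p$ shows that $w_j:=\partial_p v_j$ ($j=1,2$), $w_*:=\partial_p v_*$ and $w_z:=\partial_p z$ each solve a problem of type (\ref{nu}) with homogeneous data, whose right-hand side is assembled from $\chi$, the already-bounded $v_0,v_1,v_2,v_*,z$, and their low-order $\xi$-derivatives, and is therefore majorised by $C(1+|\xi|^{m})\chi$ for some fixed integer $m$. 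Lemma~\ref{lem_nu_psi} then yields, for $|q|\le p^*$,
\[
|\partial_q v_j(\xi;q)|+|\partial_q v_*(\xi;q)|+|\partial_q z(\xi;q)|\le C(1+|\xi|^{m})\chi(\xi;q)\le C_\lambda e^{-(\bar\gamma-\lambda)|\xi|},
\]
where the last inequality absorbs the polynomial into a slightly smaller exponent exactly as in the proofs of (\ref{v012_der}) and (\ref{v_star_z}).

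Granting these bounds, the closeness estimate (\ref{beta_u_as}) is essentially bookkeeping. I would write $\beta(x;\pm p,\pm p',\hat h)-u_{\rm as}(x;0)=[u_{\rm as}(x;\pm p)-u_{\rm as}(x;0)]\pm p'[v_*(\xi;\pm p)+C_0]+\hat h^2 z(\xi;\pm p)$, expand $u_{\rm as}$ via (\ref{u_as}), and bound each difference $v_j(\xi;\pm p)-v_j(\xi;0)=\int_0^{\pm p}\partial_q v_j\,dq$ by $C_\lambda|p|e^{-(\bar\gamma-\lambda)|\xi|}$ using the display above; since $\eps\le1$, the $\eps v_1$ and $\eps^2v_2$ contributions keep this form. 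With $p'=C'\eps p$ the term $\pm p'[v_*+C_0]$ separates into a decaying part $C'\eps|p|\,v_*\le C_\lambda|p|e^{-(\bar\gamma-\lambda)|\xi|}$ and the non-decaying part $C'C_0\eps|p|$, while (\ref{v_star_z}) bounds $\hat h^2z(\xi;\pm p)$ by $C_\lambda\hat h^2e^{-(\bar\gamma-\lambda)|\xi|}$. Adding these gives exactly (\ref{beta_u_as}).

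For the monotonicity (\ref{beta_monot}) I would exploit that $p'=C'\eps p$ ties the two flipped parameters together: setting $G(q):=\beta(x;q,C'\eps q,\hat h)$, the claim becomes $G(-p)\le G(p)$ for $p\ge0$, which follows once I show $G'(q)\ge0$ for all $q\in[-p^*,p^*]$. Differentiating (\ref{beta_def}) and using $\partial_q v_0=\chi$ gives
\[
G'(q)=\chi(\xi;q)+C'C_0\eps+C'\eps\,v_*(\xi;q)+\bigl[\eps w_1+\eps^2 w_2+C'\eps\,q\,\partial_q v_*+\hat h^2 w_z\bigr].
\]
The first three terms are nonnegative (with $v_*\ge0$ from (\ref{v_star_z})), and, using the first paragraph together with $\hat h^2\le C\eps^\mu$, the bracket is bounded in modulus by $C(1+|\xi|^{m})\eps^\mu\chi$; hence $G'(q)\ge\chi\bigl[1-C(1+|\xi|^{m})\eps^\mu\bigr]+C'C_0\eps$. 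I would then define $\xi_*=\xi_*(\eps)$ by $C(1+\xi_*^{m})\eps^\mu=1$, so $\xi_*\sim c\,\eps^{-\mu/m}\to\infty$: for $|\xi|\le\xi_*$ the square bracket is nonnegative and $G'(q)\ge0$, while for $|\xi|>\xi_*$ the decay bound $\chi\le C_\lambda e^{-(\bar\gamma-\lambda)|\xi|}$ shows that the possibly-negative term is at most $C_\lambda e^{-(\bar\gamma-\lambda)\xi_*}$ in modulus, which is smaller than any power of $\eps$ and is therefore dominated by $C'C_0\eps$ once $\eps\le\eps^*$. This establishes $G'\ge0$ and hence (\ref{beta_monot}).

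The main obstacle is the first paragraph: differentiating the defining problems, checking that the differentiated data still satisfy the hypothesis of Lemma~\ref{lem_nu_psi}, and tracking the polynomial weights $(1+|\xi|^{m})$ that grow with each differentiation. The second genuine subtlety lies in the monotonicity step, where the truncation-compensation term $\hat h^2w_z$ is only $O(\eps^\mu)$ with possibly $\mu<1$, so near the layer it cannot be absorbed by the $O(\eps)$ quantity $C'C_0\eps$; it is precisely the super-exponential smallness of $\chi$ beyond $\xi_*\sim\eps^{-\mu/m}$ that saves the estimate in the outer region.
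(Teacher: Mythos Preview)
Your proposal is correct and takes essentially the same approach as the paper: differentiate the defining problems of $v_1,v_2,v_*,z$ with respect to $p$, apply Lemma~\ref{lem_nu_psi} to obtain $|\partial_p v_j|,|\partial_p v_*|,|\partial_p z|\le C(1+|\xi|^m)\chi$, reduce (\ref{beta_monot}) to showing $\partial_p\bigl[\beta(x;p,C'\eps p,\hat h)\bigr]\ge 0$, and handle the competition between $\chi$, $C'C_0\eps$ and the $\eps^\mu(1+|\xi|^m)\chi$ error by splitting into an inner and an outer $\xi$-region. The only visible difference is the choice of cut-off: the paper takes $\xi^*=\bar C|\ln\eps|$ (so that $\eps^\mu(1+\xi^4)\le 1$ on the inner region and $(1+\xi^4)\chi\le C\eps^{1-\mu}$ on the outer one), whereas your larger threshold $\xi_*\sim c\,\eps^{-\mu/m}$ is less sharp but works just as well.
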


\begin{proof}
Fix $\hat h$, and consider
$\tilde\beta(x;p):=\beta(x;p,C'\eps p,\hat h)$. As $\beta$ is continuous on $[0,1]$,
to establish (\ref{beta_monot}),
it suffices to show that $ \frac{\partial }{\partial p}\tilde\beta\ge 0$ for $x\neq t_0$.
First, we note that
$$
{\textstyle\frac{\partial}{\partial p}}v_0%={\textstyle\frac{\partial}{\partial p}}V_0
=\chi,
\quad
|{\textstyle\frac{\partial}{\partial p}} v_j|\le C(1+\xi^{2j})\chi
\;\;\mbox{for~}j=1,2,
\quad
|{\textstyle\frac{\partial}{\partial p}} v_*|+|{\textstyle\frac{\partial}{\partial p}}z |
\le C(1+|\xi|)\chi.
$$
The relation for $v_0$ here follows from (\ref{v0_V0}) and (\ref{chi_def}).
The other relations are obtained
by differentiating problems (\ref{v_1}),\,(\ref{v_2}),\,(\ref{v_star}) and (\ref{z_prob})
with respect to $p$,
which yields four problems of the type
(\ref{nu}) for $\frac{\partial}{\partial p}v_{1,2}$,
$\frac{\partial}{\partial p}v_*$ and $\frac{\partial}{\partial p}z$, respectively.
By Lemma~\ref{lem_nu_psi} applied to this problems, the above estimates follow.
Now a calculation, using (\ref{beta_def}), (\ref{u_as}) and $v_*\ge 0$, shows that
for some constant $C''$ we have
$$
{\textstyle\frac{\partial}{\partial p}}\tilde\beta
%={\textstyle\frac{\partial}{\partial p}}[V_0+\eps v_1+\eps^2 v_2+C'\eps pv_*+\hat h^2 e]
%\ge \chi+C'\eps[v_*+C_0]- C\eps^\mu (1+\xi^4)\chi
\ge \chi+C'\eps C_0- C''\eps^\mu (1+\xi^4)\chi.
%\ge [1-C\eps^\mu (1+|\xi|^4)]\chi+ C'\eps C
$$
By (\ref{chi_decay}), there exists a sufficiently large constant $\bar C$ such that
for $|\xi|\ge \xi^*:=\bar C|\ln \eps|$ we have
$(1+\xi^4)\chi\le \frac{C'C_0}{C''}\eps^{1-\mu}$, which implies
$\frac{\partial }{\partial p}\tilde\beta\ge 0$ for $|\xi|\ge \xi^*$.
Otherwise, if $|\xi|\le \bar C|\ln\eps|$, then
$C''\eps^\mu (1+\xi^4)\le C \eps^\mu (1+|\ln\eps|^4)\le 1$,
provided that $\eps^*$ is sufficiently small; thus we again
get $\frac{\partial }{\partial p}\tilde\beta\ge 0$.
Thus we have proved (\ref{beta_monot}).

Similarly, one has $|\frac{\partial }{\partial p}\tilde\beta|\le
C(1+\xi^4)\chi+C_0\eps C'$.
Combining this with $u_{\rm as}(x;0)=\beta(x;0,0,\hat h)-\hat h^2 z
=\tilde\beta(x;0)-\hat h^2 z$
and exponential-decay estimates from  (\ref{chi_decay}) and (\ref{v_star_z}),
we get the remaining desired estimate (\ref{beta_u_as}).
\end{proof}

In this  section we established the properties (\ref{Phi_beta}),
(\ref{Fbeta_C0}) and (\ref{beta_monot}) of the function
$\beta=\beta(x;p,p',\hat h)$ that are used in~\cite{KoStMain} to
construct discrete sub- and super-solutions.

%{\color{red} Do we want to add (i) a section giving more numerical results? (ii) a section using our old material on post-processing?}

%
\bibliography{re_int1d}
\bibliographystyle{plain}

\end{document}